\newtheorem{theorem}{Theorem}
\newtheorem{theo}{Theorem}
\newtheorem{lemma}[theo]{Lemma}
\newenvironment{proof}{\par \noindent \textbf{Proof. }}{\QED \par \bigskip \par}
\newcommand{\QED}{\hfill$\square$}
\begin{document}

\baselineskip=0.35in

\vspace*{0cm}

\begin{center}
{\Large \bf On the Maximum Atom-Bond Sum-Connectivity Index of Graphs}

\vspace{6mm}

{\large \bf Tariq Alraqad}, {\large \bf Hicham Saber}, {\large \bf Akbar Ali\footnote{Corresponding author.}}, {\large \bf Abeer M. Albalahi}

\vspace{6mm}

\baselineskip=0.25in

{\it Department of Mathematics, Faculty of Science,\\ University of Ha\!'il, Ha\!'il, Saudi Arabia}\\[3mm]
{\tt t.alraqad@uoh.edu.sa, Hicham.saber7@gmail.com, akbarali.maths@gmail.com, a.albalahi@uoh.edu.sa}

\vspace{8mm}

\end{center}

%\vspace{3mm}

\begin{abstract}
\noindent
The atom-bond sum-connectivity (ABS) index of a graph $G$ with edges $e_1,\cdots,e_m$ is the sum of the numbers $\sqrt{1-2(d_{e_i}+2)^{-1}}$ over $1\le i \le m$, where $d_{e_i}$ is the number of edges adjacent with $e_i$. In this paper, we study the maximum values of the ABS index over graphs with given parameters. More specifically, we determine the maximum ABS index of connected graphs of a given order and with a fixed (i) minimum degree, (ii) maximum degree, (iii) chromatic number, (iv) independence number, (v) number of pendent vertices. We also characterize the graphs attaining the maximum ABS values in all of these classes.
\end{abstract}

\noindent\textbf{Keywords}: topological index, atom-bond sum-connectivity, independence number, pendent vertex, chromatic number.
\newline
\textbf{Mathematics Subject Classification:} 05C07, 05C09, 05C35.

\baselineskip=0.30in

\section{Introduction}
In this paper, just finite and simple graphs are taken into account.
The sets of vertices and edges of a graph $G$ are denoted, respectively, by $V(G)$ and $E(G)$.
The degree of a vertex $v\in V(G)$ is indicated by $d_v(G)$ or just $d_v$ if the graph being discussed is unambiguous.
We utilize the conventional notation and nomenclature of (chemical) graph theory, and we refer readers to the relevant books, for example, \cite{ga,gc}.

Chemical graph theory is a field in which chemical structures are modeled  by graphs. The atoms and bonds are replaced by vertices and edges, respectively. In this way, it is possible to use the concepts of graph theory to study the chemical structures. Graph invariants that adopt quantitative values are widely termed as topological indices in chemical graph theory.

The connectivity index  (or the Randi\'c index) \cite{Gutman-13}, a well-known topological index, was devised in the 1970s by the chemist Milan Randi\'c under the name ``branching index'' \cite{g1}.  Soon after its discovery, the connectivity index quickly found a variety of uses \cite{g2,g3,g4} in chemistry and consequently it become one of the most applied and well-researched index.
For a graph $G$, the connectivity index is defined as
\[
R(G) = \sum_{vw\in E(G)} \frac{1}{\sqrt{d_vd_w}}.
\]

The Randi\'c index has be
modified in several ways. Here, we mention two topological indices which were introduced by taking into consideration the definition of the Randi\'c index, namely the  ``sum-connectivity (SC) index'' \cite{g5} and the ``atom-bond connectivity (ABC) index'' \cite{g7}. These indices have the following definitions for a graph $G$:
\[
SC(G) = \sum_{vw\in E(G)} \frac{1}{\sqrt{d_v+d_w}}
\]
and
\[
ABC(G) = \sum_{wv\in E(G)} \sqrt{\frac{d_v+d_w-2}{d_vd_w}}.
\]
Detail regarding the mathematical aspects of the SC and ABC indices may be found in the review papers
\cite{Ali-19} and \cite{g9}, respectively.

By utilizing the definitions of the ABC and SC indices, a novel topological index -- the atom-bond sum-connectivity (ABS) index -- has recently been proposed in \cite{g10}.
For a graph $G$, this index is defined as
$$
ABS(G) = \sum_{uv\in E(G)} \left(\frac{d_u+d_v-2}{d_u+d_v}\right)^{\frac{1}{2}}\,.
$$
In the paper \cite{g10},  graphs possessing the maximum and minimum values of the ABS index were characterized over the classes of graphs and (chemical) trees of a given order; such kind of extremal results regarding unicyclic graphs were found in \cite{ABS-EJM}, where also chemical applications of the ABS index were reported. The paper \cite{Alraqad-arXiv} is concerned with the problems of determining graphs attaining the minimum ABS index among all trees of a fixed order and/or a given number of pendent vertices; see also \cite{Maitreyi-arXiv} where one of these two problems is attacked independently.

A pendent vertex in a graph is a vertex of degree $1$. The least number of colors required to color the vertices of a graph, so that every two adjacent vertices have different colors, is termed as the chromatic number. A subset $S$ of the vertex set of $G$ is said to be independent if every pair of vertices of $S$ are non-adjacent in $G$. The maximum number among cardinalities of all independent sets of $G$ is known as the independence number of $G$ and it is denoted by $\alpha(G)$. We denote by $\Upsilon_{n,\delta}$,  $\Psi_{n,\Delta}$, $\Gamma_{n,p}$, $\Pi_{n,\chi}$, and $\Sigma_{n,\alpha}$ the classes of graphs of order $n$ and minimum degree $\delta$, maximum degree $\Delta$, $p$ pendent vertices, fixed chromatic number $\chi$, and fixed independence number of $\alpha$ respectively.   In this paper, we aim to characterize the graphs having the maximum values of the ABS index over the classes, $\Upsilon_{n,\delta}$, $\Psi_{n,\Delta}$, $\Gamma_{n,p}$, $\Pi_{n,\chi}$, and $\Sigma_{n,\alpha}$.

\section{Results}\label{Sec2}

Throughout this section, we consider only connected graphs. To prove our results, we need few technical lemmas.

\begin{lemma}[see \cite{g10}]\label{lem-AZI+e}
Let $G$ be a graph. If $u$ and $v$ are non-adjacent vertices in $G$ then $ABS(G + uv) > ABS(G)$.
\end{lemma}

\begin{lemma}\label{l1}
Let $$f(x,y)=\left(\dfrac{x+y-2}{x+y}\right)^{\frac{1}{2}},$$
where $\min\{x,y\}\geq 1$. For every positive real number $s$, define the function $g_s(x,y)=f(x+s,y)-f(x,y)$. Then $f$ is strictly increasing in $x$ and in $y$. The function $g_s$ is strictly decreasing and convex in $x$ and in $y$.

\end{lemma}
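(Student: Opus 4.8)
The plan is to prove everything by elementary single-variable calculus applied to $f$ viewed as a function of one variable with the other held fixed. Since $f(x,y)=\left(1-\tfrac{2}{x+y}\right)^{1/2}$ is symmetric in $x$ and $y$, it suffices to carry out all computations with respect to $x$; the corresponding statements in $y$ follow by symmetry. First I would set $t=x+y$ and note $f=\sqrt{1-2/t}=\sqrt{(t-2)/t}$, so that $\partial f/\partial x = f'(t)$ where $f(t)=\sqrt{1-2/t}$. Differentiating, $f'(t)=t^{-2}\bigl(1-2/t\bigr)^{-1/2}>0$ for $t>2$, which gives strict monotonicity of $f$ in $x$ (and in $y$). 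One should check the boundary: with $\min\{x,y\}\ge 1$ we have $t=x+y\ge 2$, and at $t=2$ the derivative blows up but $f$ is still increasing, so strict monotonicity holds throughout the stated domain.

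Next I would handle the claims about $g_s(x,y)=f(x+s,y)-f(x,y)$. The key observation is that for fixed $y$ and $s>0$,
\[
g_s(x,y)=\phi(x+s)-\phi(x),\qquad \phi(x):=f(x,y),
\]
so that $\partial g_s/\partial x = \phi'(x+s)-\phi'(x)$ and $\partial^2 g_s/\partial x^2 = \phi''(x+s)-\phi''(x)$. Thus the monotonicity and convexity of $g_s$ in $x$ are governed entirely by the behavior of the first derivative $\phi'$: specifically, $g_s$ is strictly decreasing in $x$ precisely when $\phi'$ is strictly decreasing (i.e.\ $\phi$ is strictly concave), and $g_s$ is convex in $x$ precisely when $\phi'$ is convex. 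Therefore the whole problem reduces to analyzing $\phi'(x)=f'(x+y)$ as a function of $t=x+y$, namely showing that $f'(t)$ is strictly decreasing and convex on $t>2$.

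Concretely, I would compute the higher derivatives of the single-variable function $f(t)=\sqrt{1-2/t}$. A direct computation gives $f''(t)$ and $f'''(t)$ as explicit rational expressions multiplied by a negative power of $(1-2/t)$; the task is to show $f''(t)<0$ (concavity, giving $g_s$ strictly decreasing) and $f'''(t)>0$ (which gives $g_s$ convex, since $\partial^2 g_s/\partial x^2=f''(x+s+y)-f''(x+y)>0$ follows from $f''$ being increasing, equivalently $f'''>0$). After clearing the positive factor $t^{k}(1-2/t)^{\pm 1/2}$, each inequality reduces to showing that a low-degree polynomial in $t$ is of a definite sign for $t\ge 2$; this is the routine part and can be dispatched by factoring or by noting the polynomial is manifestly positive/negative on $[2,\infty)$.

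The main obstacle I anticipate is bookkeeping rather than conceptual difficulty: the derivatives of $\sqrt{1-2/t}$ produce increasingly messy expressions with half-integer powers, and one must be careful to track the sign of the polynomial cofactors on the exact domain $t\ge 2$ (the endpoint $t=2$, where some derivatives diverge, should be examined separately but does not affect the open-interval sign analysis). An alternative that may reduce the algebra is to substitute $u=1/t\in(0,1/2]$ and write $f=\sqrt{1-2u}$, turning all derivatives into polynomials times negative powers of $(1-2u)$, which makes the sign determination cleaner. Either way, once the signs of $f''$ and $f'''$ are pinned down on $t>2$, the convexity-and-monotonicity reduction in the previous paragraph finishes the proof immediately, and symmetry in $x\leftrightarrow y$ handles the remaining statements.
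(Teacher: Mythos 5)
Your proposal is correct and follows essentially the same route as the paper: compute the derivatives of $f$ (which depend only on $t=x+y$), deduce monotonicity of $f$ from $f'>0$, and control $g_s(x,y)=\phi(x+s)-\phi(x)$ via the signs of $f''$ and $f'''$. The deferred "routine" sign checks do go through cleanly, since $f''(t)=-\tfrac12(t-2)^{-3/2}t^{-3/2}-\tfrac32(t-2)^{-1/2}t^{-5/2}$ is a sum of negative terms and $f'''(t)$ is a sum of positive terms for $t>2$, which is exactly the level of detail the paper itself supplies.
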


\begin{proof}
The first and second partial derivatives of $f$ with respect to $x$ and $y$ are  calculated as
$$\frac{\partial f}{\partial x}(x,y)=\frac{\partial f}{\partial y}(x,y)=(x+y-2)^{-\frac{1}{2}}(x+y)^{-\frac{3}{2}},$$
$$\frac{\partial^2 f}{\partial x^2}(x,y)=\frac{\partial^2 f}{\partial y^2}(x,y)=-\frac{1}{2}(x+y-2)^{-\frac{3}{2}}(x+y)^{-\frac{3}{2}}-\frac{3}{2}(x+y-2)^{-\frac{1}{2}}(x+y)^{-\frac{5}{2}}.$$
Clearly for  $x>1$,  $\frac{\partial f}{\partial x}(x,y)>0$. Thus  $f$ is strictly increasing in $x$ and in $y$.
Since $\frac{\partial^2 f}{\partial x^2}(x,y)<0$, whenever $x>1$, we get $\frac{\partial f}{\partial x}(x,y)$ is strictly decreasing in $x$ when $x\geq1$. So $$\frac{\partial g_s}{\partial x}(x,y)=\frac{\partial f}{\partial x}(x+s,y)-\frac{\partial f}{\partial x}(x,y)<0\quad \text{when \ } x\geq 1,$$ and thus $g_s(x,y)=f(x+s,y)-f(x,y)$, is strictly decreasing  in $x$ when $x\geq 1$. Additional, $\frac{\partial^2 f}{\partial x^2}(x,y)$ is strictly increasing when $x\geq 1$. So  $$\frac{\partial^2 g_s}{\partial x^2}(x,y)=\frac{\partial^2 f}{\partial x^2}(x+s,y)-\frac{\partial^2 f}{\partial x^2}(x,y)>0,$$ and hence $g_s(x,y)$, is convex in $x$ when $x\geq 1$.
\end{proof}

\begin{lemma}\label{l2}
Let $M$ and $N$ be real numbers satisfying $1\leq M\leq N$. Then for every positive real number $s$, the function $h_s(x)=g_s(x,N)-g_s(x,M)$ is increasing in $x$ when $x\geq 1$.
\end{lemma}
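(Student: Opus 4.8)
The plan is to reduce this two-variable statement to a single-variable convexity fact already supplied by Lemma \ref{l1}. The starting observation is that $f(x,y)=\left(\frac{x+y-2}{x+y}\right)^{1/2}$ depends on $x$ and $y$ only through the sum $x+y$; writing $f(x,y)=F(x+y)$ with $F(u)=\left(\frac{u-2}{u}\right)^{1/2}$, the increment also depends only on the sum, since $g_s(x,y)=f(x+s,y)-f(x,y)=F(x+y+s)-F(x+y)$. Hence $g_s(x,y)=G(x+y)$ where $G(u)=F(u+s)-F(u)$, and therefore $h_s(x)=g_s(x,M)-g_s(x,N)=G(x+M)-G(x+N)$. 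This is the whole point of the reduction: the behaviour of $h_s$ is controlled by a single function $G$ of one variable.

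Differentiating gives $h_s'(x)=G'(x+M)-G'(x+N)$. The key input is that $G$ is convex, i.e.\ $G'$ is monotone, which is precisely the assertion that $g_s$ is convex in $x$ established in Lemma \ref{l1}: there one finds $\frac{\partial^2 g_s}{\partial x^2}=F''(x+y+s)-F''(x+y)>0$, because $\frac{\partial^2 f}{\partial x^2}=F''$ is increasing. With the monotonicity of $G'$ in hand, I would compare the two arguments $x+M$ and $x+N$ using the hypothesis $M\le N$; the resulting ordering of $G'(x+M)$ and $G'(x+N)$ fixes the sign of $h_s'(x)$ uniformly for all admissible $x\ge 1$, which yields the asserted monotonicity of $h_s$. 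An essentially equivalent route avoids the one-variable reduction: write $h_s'(x)=\frac{\partial g_s}{\partial x}(x,M)-\frac{\partial g_s}{\partial x}(x,N)$ and show $\frac{\partial g_s}{\partial x}(x,y)$ is monotone in $y$; since $g_s$ is a function of $x+y$, one has $\frac{\partial^2 g_s}{\partial x\,\partial y}=\frac{\partial^2 g_s}{\partial x^2}$, so the sign is again governed by the convexity in Lemma \ref{l1}. Either way, the entire content is pushed onto the already-proved convexity, and no fresh derivative computation is required.

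The main obstacle is not the differentiation but the careful bookkeeping of signs and the domain. One must invoke the convexity of $G$ on the correct interval and in the correct direction relative to $M\le N$, and one must respect the singularity of $F$ and its derivatives at $u=2$. Accordingly, I would first note that for $x\ge 1$ and $M\ge 1$ the relevant arguments satisfy $x+M\ge 2$, with equality only in the degenerate case $x=M=1$; restricting to $x+M>2$ keeps $F''$ finite and strictly increasing, so that $G''>0$ there and the convexity comparison applies without incident, the boundary being handled afterwards by continuity. This is where I expect the proof to demand the most attention, since the monotonicity direction of $h_s$ is determined entirely by this sign analysis.
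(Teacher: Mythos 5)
Your one-variable reduction is correct and is in substance the same argument as the paper's: writing $f(x,y)=F(x+y)$ with $F(u)=\sqrt{(u-2)/u}$ gives $g_s(x,y)=G(x+y)$ with $G(u)=F(u+s)-F(u)$, so $h_s(x)=G(x+M)-G(x+N)$ and $h_s'(x)=G'(x+M)-G'(x+N)$; the paper's route via $\frac{\partial^2 g_s}{\partial x\partial y}>0$ is exactly your statement $G''>0$. The gap is the one step you declined to write out. Carry out the comparison: $G''>0$ means $G'$ is increasing, and $x+M\le x+N$, so $G'(x+M)\le G'(x+N)$ and hence $h_s'(x)\le 0$. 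That makes $h_s$ \emph{decreasing}, the opposite of what the lemma asserts, so your claim that the ordering ``yields the asserted monotonicity'' is false as written: an honest completion of your own argument refutes the statement rather than proving it. A numerical check confirms this; with $s=1$, $M=1$, $N=2$ one gets $h_1(1)\approx 0.448$, $h_1(2)\approx 0.062$, $h_1(3)\approx 0.026$.

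The root cause is that the lemma itself is misstated, and the paper's proof hides this behind the matching sign slip: it writes $h_s'(x)=\frac{\partial g_s}{\partial x}(x,N)-\frac{\partial g_s}{\partial x}(x,M)$, which is the derivative of $g_s(x,N)-g_s(x,M)$, not of $h_s(x)=g_s(x,M)-g_s(x,N)$. The version actually invoked in the proof of Theorem \ref{t5}(3) --- that $y\mapsto g_1(x_1,y)-g_1(x_2,y)$ is increasing in $y$ when $x_1\ge x_2$ --- is, by the symmetry of $g_1$, precisely the assertion that $h_s$ is decreasing. So the correct statement is that $h_s$ is decreasing (equivalently, define $h_s(x)=g_s(x,N)-g_s(x,M)$ and keep ``increasing''), and your mechanism does prove that corrected statement; but as a proof of the lemma as quoted it fails at exactly the sign determination you left implicit. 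Your concern about the singularity at $u=2$ is a side issue: for $x\ge1$, $M\ge1$ the only boundary case is $x=M=1$, where $G'(x+M)$ diverges to $-\infty$ and only reinforces $h_s'\le 0$.
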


\begin{proof}
When $x\geq 1$, we have $\frac{\partial^2 f}{\partial x \partial y}$ is strictly increasing in $x$. So
$$\frac{\partial^2 g_s}{\partial x \partial y}(x,y)=\frac{\partial^2 f}{\partial x \partial y}(x+s,y)-\frac{\partial^2 f}{\partial x \partial y}(x,y)>0.$$
Thus $\frac{\partial g_s}{\partial x}$ is increasing in $y$, and hence $$h_s'(x)=\frac{\partial g_s}{\partial x}(x,N)-\frac{\partial g_s}{\partial x}(x,M)>0.$$ Therefore $h_s(x)$ is increasing in $x$ when $x\geq 1$.
\end{proof}

For $n\geq2$ and $\delta\geq1$, denote by $D_{n,\delta}$ the graph obtained by joining a new vertex to exactly $\delta$ vertices from $K_{n-1}$. The next theorem gives a sharp upper bound on the $ABS$ value of all graphs in the class $\Upsilon_{n,\delta}$. A graph $G$ is called $r-$regular if $d_v=r$ for all $v\in V(G)$, and $G$ is called nearly $r-$regular if $G$ has one vertex of degree $r-1$ and $n-1$ vertices of degree $r$. Nearly $r-$regular graphs are also known as $(r,r-1)-$quasi-regular graphs.

\begin{lemma}\label{l02}{\rm\cite{ca20}} Let $n$ and $r$ be integers such that $2\leq r<n$.
\begin{enumerate}
    \item If $nr$ is even then there is a connected $r-$regular graph of order $n$.
    \item If $nr$ is odd then there is a connected nearly $r-$regular graph of order $n$.
\end{enumerate}
\end{lemma}

\begin{theorem}\label{t0}
Let $n\geq2$ and $\delta\geq1$. If $G\in \Upsilon_{n,\delta}$ then
\begin{align}\label{eq00}
ABS(G)\leq&
\delta \sqrt{\frac{n+\delta-3}{n+\delta-1}}+
\frac{1}{2}\delta(\delta-1)\sqrt{\frac{n-2}{n-1}}+
\delta(n-\delta-1)\sqrt{\frac{2n-5}{2n-3}}\nonumber\\[2mm]
&+\frac{1}{2}(n-\delta-1)(n-\delta-2)\sqrt{\frac{n-3}{n-2}}
\end{align}
with equality if and only if $G\cong D_{n,\delta}$.
\end{theorem}
\begin{proof}
Let $G^{\ast}\in \Upsilon_{n,\delta}$ be having the maximum $ABS$. Let $u\in V(G)$ such that $d_u=\delta$. Assume there are two nonadjacent vertices  $v,w\in V(G)\setminus \{u\}$. Then $G'=G^{\ast}+\{vw\}\in \Upsilon_{n,\delta}$ and $ABS(G')>ABS(G^{\ast})$, a contradiction. Therefore $G^{\ast}\cong D_{n,\delta}$.
\end{proof}

\begin{theorem}\label{t01}
Let $n\geq2$ and $\Delta\geq1$ be integers.
\begin{enumerate}
\item\label{t01p1} If $\Delta n$ is even, then for each $G\in \Psi_{n,\Delta}$,
\begin{equation}\label{eq011}
ABS(G)\leq\frac{n}{2}\sqrt{\Delta(\Delta-1)}
\end{equation}
with equality if and only if $G$ is $\Delta-$regular.
\item\label{t01p2} If $\Delta n$ is odd, then for each $G\in \Psi_{n,\Delta}$,
\begin{equation}\label{eq012}
ABS(G)\leq(\Delta-1)\sqrt{\frac{2\Delta-3}{2\Delta-1}}+\frac{\Delta n-2\Delta+1}{2}\sqrt{\frac{\Delta-1}{\Delta}}
\end{equation}
with equality if and only if $G$ is nearly $\Delta-$regular.
\end{enumerate}
\end{theorem}
\begin{proof}
(\ref{t01p1}) Since$\Delta n$ is even, by Lemma \ref{l02}, there is a $\Delta-$regular graph of order $n$.  Let $G\in \Psi_{n,\Delta}$. Then for each  $uv\in E(G)$, we have $f(d_u,d_v)\leq f(\Delta,\Delta)$. Additionally, we have
$$|E(G)|=\frac{1}{2}\sum_{v\in V(G)}d_v\leq \frac{\Delta n}{2}.$$
Thus $$ABS(G)=\sum_{uv\in E(G)}f(d_u,d_v)\leq \frac{\Delta n}{2}f(\Delta,\Delta)=\frac{n}{2}\sqrt{\Delta(\Delta-1)}.$$
Moreover, the equality holds if and only if $|E(G)|=\frac{\Delta n}{2}$ and for all $uv\in E(G)$, $f(d_u,d_v)=f(\Delta,\Delta)$. Consequently, the equality holds if and only if  $G$ is $\Delta-$regular.

(\ref{t01p2}) Since   $\Delta n$ is odd, by Lemma \ref{l02}, there is a nearly $\Delta-$regular graphs of order $n$.  Let
$G\in \Psi_{n,\Delta}$. Clearly $G$ is not $\Delta-$regular because $\Delta n$ is odd. Let $v\in V(G)$ such that $d_v<\Delta$ and let $M$ be the set of all edges incident with $v$. Then we have for each  $u\in N(v)$, $f(d_u,d_v)\leq f(\Delta,\Delta-1)$ and for each $uw\in E(G)\setminus M$, $f(d_u,d_w)\leq f(\Delta,\Delta)$. Additionally,
$$|E(G)|=\frac{1}{2}\sum_{u\in V(G)}d_u\leq \frac{d_v+\Delta (n-1)}{2}.$$
Thus
\begin{align}ABS(G)=&\sum_{u\in N(v)}f(d_u,d_v)+\sum_{uw\in E(G)\setminus M}f(d_u,d_w)\\[2mm]
\leq & d_vf(\Delta,\Delta-1)+\left(\frac{d_v+\Delta (n-1)}{2}-d_v\right)f(\Delta,\Delta)\\[2mm]
= & d_v\left(\sqrt{\frac{2\Delta-3}{2\Delta-1}}-\sqrt{\frac{\Delta-1}{4\Delta}}\right)+\frac{\Delta (n-1)}{2}\sqrt{\frac{\Delta-1}{\Delta}}.
\end{align}
Since $\sqrt{\frac{2\Delta-3}{2\Delta-1}}-\sqrt{\frac{\Delta-1}{4\Delta}}>0$ and $d_v\leq \Delta -1$, we get
$$ABS(G)\leq(\Delta-1)\sqrt{\frac{2\Delta-3}{2\Delta-1}}+\frac{\Delta n-2\Delta+1}{2}\sqrt{\frac{\Delta-1}{\Delta}}.$$
Moreover, the equality holds if and only if $d_v=\Delta-1$, $f(d_u,d_v)=f(\Delta-1,\Delta)$ for all $u\in N(v)$, and $f(u,w)=f(\Delta,\Delta)$ for all $uw\in E(G)\setminus M$.  Thus, the equality holds if and only if  $G$ is nearly $\Delta-$regular.
\end{proof}

A graph is called $r-$partite if its set of vertices can be partitioned into $r$ subsets, called partite sets, so that all vertices in the same partite set are pairwise non-adjacent. An $r-$partite graph is complete if every pair of vertices that belong to different partite sets are adjacent. We denote, by $T_{n,r}$, the complete $r$-partite graph of order $n$ such that $|k_i - k_j| \le 1$, where $k_i$, with $i = 1, 2, \cdots , r,$ is the number of vertices in the $i$-th partite set of $T_{n,r}$. Clearly $T_{n,r}\in \Pi_{n,r}$. The next theorem gives a sharp upper bound on the $ABS$ value of all graphs in the class $\Pi_{n,\chi}$.

\begin{theorem}\label{t1}
Let $n\geq5$ and $\chi\geq3$ and let $q= \lfloor n/\chi \rfloor$ and $r=n-q\chi$. If $G\in \Pi_{n,\chi}$ then
\begin{align}\label{eq1} ABS(G)\leq& \frac{r(r-1)q^2}{2}\sqrt{\frac{n-q-1}{n-q}}+r(\chi-r)q(q+1)\sqrt{\frac{2n-2q-3}{2n-2q-1}}\nonumber\\[2mm]
&+\frac{(\chi-r)(\chi-r-1)(q+1)^2}{2}\sqrt{\frac{n-q-2}{n-q}},\end{align}
with equality if and only if $G\cong T_{n,\chi}$.

\end{theorem}
\begin{proof}
Let $G^{\ast}\in \Pi_{n,\chi}$ be having the maximum $ABS$. The vertex set $V(G)$ of $G$ can be partitioned into $\chi$ subsets, say $Y_1,Y_2,\cdots,Y_\chi$ such that $|Y_i|=k_i$ for $i=1,2,\cdots,\chi$, provided that $k_1\leq k_2 \leq \cdots \leq k_{\chi}$. Consequently, $G^{\ast}$ is isomorphic to a $\chi$-partite graph.  Thus, by Lemma \ref{lem-AZI+e}, it must be isomorphic to the complete $\chi$-partite graph $K_{k_1,k_2,\cdots,k_{\chi}}$. It is remaining to show that $k_{\chi}-k_1\leq 1$. Seeking a contradiction, assume that $k_{\chi}-k_1\geq 2$. Let $H\cong K_{d_1,d_2,\cdots,d_{\chi}}$, where $d_1=k_1+1$, $d_{\chi}=k_{\chi}-1$, and $d_i=k_i$ for every $i\in \{2,\cdots,\chi-1\}$
\begin{align}\label{eq-01A}
ABS(H)-ABS(G^{\ast})&= (k_1+1)(k_{\chi}-1)f(n-k_1-1,n-k_{\chi}+1) - k_1k_{\chi}f(n-k_1,n-k_{\chi}) \nonumber\\
&\ \ \ \ +\sum_{i=2}^{\chi-1}\left[k_i(k_1+1)f(n-k_1-1,n-k_i) - k_1k_if(n-k_1,n-k_i)\right]\nonumber\\
&\ \ \ \ +\sum_{i=2}^{\chi-1}\left[k_i(k_{\chi}-1)f(n-k_{\chi}+1,n-k_i) - k_{\chi}k_if(n-k_{\chi},n-k_i)\right]\nonumber\\
&=(k_{\chi}-k_1-1)f(n-k_1,n-k_{\chi})\nonumber\\
&\ \ \ \ +\sum_{i=2}^{\chi-1}k_i\left[f(n-k_1-1,n-k_i) - f(n-k_{\chi}+1,n-k_i)\right]\nonumber\\
&\ \ \ \ +\sum_{i=2}^{\chi-1}k_i\left[k_{\chi}g_1(n-k_{\chi},n-k_i)-k_1g_1(n-k_1-1,n-k_i)\right]\nonumber
\end{align}
Since $n-k_1-1\geq n-k_{\chi}+1$, from Lemma \ref{l1} we get that for each $i=2,\cdots,\chi-1$,
$f(n-k_1-1,n-k_i) - f(n-k_{\chi}+1,n-k_i)\geq0$ and
$k_{\chi}g_1(n-k_{\chi},n-k_i)-k_1g_1(n-k_1-1,n-k_i)>k_1\left[g_1(n-k_{\chi},n-k_i)-g_1(n-k_1-1,n-k_i)\right]\geq0.$
So $ABS(H)-ABS(G^{\ast})>0$, a contradiction. Thus $k_{\chi}-k_1\leq 1$.
\end{proof}

The next theorem gives a sharp upper bound on the $ABS$ value of all graphs in the class $\Sigma_{n,\alpha}$.
\begin{theorem}\label{t3}
Let $n$ and $\alpha$ be positive integers. If $G\in \Sigma_{n,\alpha}$ then
$$ABS(G)\leq \alpha(n-\alpha)\sqrt{\frac{2n-\alpha-3}{2n-\alpha-1}}+\frac{1}{2}(n-\alpha)(n-\alpha-1)\sqrt{\frac{n-2}{n-1}},$$ with equality if and only if  $G\cong N_{\alpha}+K_{n-\alpha}$.
\end{theorem}
\begin{proof}
Let $G^{\ast}$ be having the maximum $ABS$ value in $\Sigma_{n,\alpha}$. Let $W$ be an independent set in $G^{\ast}$ with $|W|=\alpha$. Assume that there are two non-adjacent vertices $u$ and $v$ such that $u\in W$ and $v\in V(G^{\ast})-W$. Then $G^{\ast}+uv\in \Sigma_{n,\alpha}$ and $ABS(G^{\ast}+uv)>ABS(G^{\ast})$, a contradiction. So, each vertex in $W$ is adjacent to every vertex in $V(G^{\ast})-W$. Furthermore, every pair of vertices in $V(G^{\ast})-W$ are adjacent, yielding $G[V(G^{\ast})-W]\cong K_{n-\alpha}$. Thus  $G^{\ast}\cong N_{\alpha}+K_{n-\alpha}$.
Therefore, $$ABS(G^{\ast})= \alpha(n-\alpha)\sqrt{\frac{2n-\alpha-3}{2n-\alpha-1}}+\frac{1}{2}(n-\alpha)(n-\alpha-1)\sqrt{\frac{n-2}{n-1}}.$$
\end{proof}

 The next theorem gives a sharp upper bound on the $ABS$ value of all graphs in the class $\Gamma_{n,p}$.  We denote by $S_{n-1}$ the star of order $n$ and by $S_{m,n-m}$ the double star of order $n$, where the internal vertices have degrees $m$ and $n-m$. We also denote by $K_{m}^p$ the graph of order $m+p$ and $p$ pendent vertices such that the induced subgraph on the internal vertices is complete graph and all pendent vertices are adjacent to the same internal vertex.

\begin{theorem}\label{t5}
Let $G\in \Gamma_{n,p}$.
\begin{enumerate}
\item\label{t5p1} If $p=n-1$ then $G\cong ABS(S_{n-1})$, and thus $ABS(G)=\frac{(n-1)\sqrt{n-2}}{n}$.
\item\label{t5p2} If $p=n-2$ then $ABS(G)\leq \frac{1}{\sqrt{3}}+\frac{\sqrt{n-2}}{n}+\frac{(n-3)\sqrt{n-3}}{n-1},$  with equality if and only if $G\cong S_{2,n-2}$.
\item\label{t5p3} If $p \leq n-3$ then $$ABS(G)\leq p\sqrt{\frac{n-2}{n}}+(n-p-1)\sqrt{\frac{2n-2p-3}{2n-2p-1}}+\frac{\sqrt{n-p-1}(n-p-2)^{\frac{3}{2}}}{2},$$ with equality if and only if $G\cong K_{n-p}^p$.
\end{enumerate}
\end{theorem}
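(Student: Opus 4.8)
The plan is to handle the three cases in increasing order of difficulty, in each case first fixing the structure of an extremal graph and then substituting degrees into the definition of $ABS$. Throughout I will use the observation, immediate from the formula for $f$ in Lemma \ref{l1}, that each edge contribution $f(d_u,d_v)=\sqrt{1-2/(d_u+d_v)}$ depends only on the degree sum $d_u+d_v$; writing $F(d_u+d_v):=f(d_u,d_v)$, the function $F$ is increasing and concave. Case \ref{t5p1} is forced: in a connected graph of order $n$ with $n-1$ vertices of degree $1$, the remaining vertex is adjacent to all the others, so $G\cong S_{n-1}$ and the value follows at once.

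For Case \ref{t5p2}, let $u,v$ be the two non-pendant vertices. Since each pendant is adjacent to a non-pendant vertex and $G$ is connected, $u$ and $v$ are adjacent, so $G$ is a double star with $d_u+d_v=n$ and $2\le d_u\le d_v$. The central edge contributes $F(n)$, independent of the split, so I only need to maximize $P(d_u)+P(d_v)$, where $P(d)=(d-1)\,f(d,1)$ is the total pendant contribution of an endpoint of degree $d$. Here there are no other edges, hence no competing term, and the claim reduces to the convexity of $P$ on $[2,\infty)$ (which I will extract from the convexity statements of Lemma \ref{l1}); a convex function of $d_u$ with $d_u+d_v$ fixed is maximized at the extreme split $d_u=2$, $d_v=n-2$, that is, $G\cong S_{2,n-2}$.

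For Case \ref{t5p3} I first fix the gross structure. As before each pendant attaches to an internal vertex, and since adding an edge between two internal vertices neither changes the pendant count nor decreases $ABS$ (Lemma \ref{lem-AZI+e}), I may assume the $n-p\ (\ge 3)$ internal vertices induce $K_{n-p}$; thus $G$ is $K_{n-p}$ together with $p$ pendants distributed over its vertices. It remains to show that the distribution concentrating all $p$ pendants on one vertex is optimal, giving $G\cong K_{n-p}^{\,p}$. I will prove this by a transfer argument: if internal vertices $x,y$ with $d_x\le d_y$ each carry a pendant, then moving one pendant from $x$ to $y$ strictly increases $ABS$. Using the degree-sum observation this change splits into three pieces. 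The edge $xy$ contributes $0$, since $(d_x-1)+(d_y+1)=d_x+d_y$. The pendant edges contribute a nonnegative amount, controlled by the convexity of $g_1$ from Lemma \ref{l1} (concentrating pendants raises the pendant-edge total). The clique edges to the remaining internal vertices $z$ contribute $\sum_{z\ne x,y}\big(g_1(d_y,d_z)-g_1(d_x-1,d_z)\big)$, which is nonpositive because $g_1$ is strictly decreasing in its first argument (Lemma \ref{l1}).

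The main obstacle is exactly the conflict between these last two pieces: the pendant edges favour concentration while the clique edges favour balance, and I must show the pendant gain strictly outweighs the clique loss. This is where Lemma \ref{l2} enters, as it quantifies how the clique contribution $g_1(\,\cdot\,,d_z)$ varies with the neighbour degree $d_z$ and thereby bounds the total clique loss over all $n-p-2$ neighbours against the pendant gain, leaving a strictly positive net change. Granting the transfer inequality, repeated application collapses every pendant onto a single vertex and yields $G\cong K_{n-p}^{\,p}$. The stated bound then follows by substituting the degrees of $K_{n-p}^{\,p}$ — namely $n-1$ for the loaded vertex, $n-p-1$ for each of the other internal vertices, and $1$ for each pendant — into the definition of $ABS$, which is a routine computation.
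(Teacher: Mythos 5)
Your overall architecture coincides with the paper's: part \ref{t5p1} is forced, part \ref{t5p2} reduces to a one-variable optimization over the split of the pendant vertices between the two centres of a double star, and part \ref{t5p3} combines Lemma \ref{lem-AZI+e} (the internal vertices must induce a clique) with a pendant-transfer argument toward the most loaded internal vertex. Your route in part \ref{t5p2} via convexity of $P(d)=(d-1)f(d,1)$ is a clean variant of the paper's explicit computation of $h'(t)$ and it does work; note, however, that this convexity does \emph{not} follow from Lemma \ref{l1}, whose convexity statement concerns $g_s(x,y)=f(x+s,y)-f(x,y)$ and not $(d-1)f(d,1)$. You would have to verify it directly (a short computation gives $P''(d)=3(d-1)^{-1/2}(d+1)^{-5/2}>0$, so the fact is true).

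The genuine gap is at the crux of part \ref{t5p3}. You correctly isolate the conflict between the pendant gain and the clique loss, but you then assert that Lemma \ref{l2} bounds the total clique loss against the pendant gain, ``leaving a strictly positive net change.'' Lemma \ref{l2} does no such thing: it only allows you to replace each neighbour degree $d_z=a_i+n-p-1$ by the worst case $n-p-1$, which reduces the net change to a difference $w(a_1)-w(a_2-1)$ of the single-variable function
\[
w(t)=f(t+n-p,1)+t\,g_1(t+n-p-1,1)+(n-p-2)\,g_1(t+n-p-1,n-p-1).
\]
The decisive step --- showing that $w$ is increasing, i.e., that the pendant gain strictly dominates the clique loss --- is a separate quantitative estimate (in the paper, the comparison $L^2-K^2>0$ of two explicit radicals) that is not implied by the monotonicity and convexity statements of Lemmas \ref{l1} and \ref{l2}, and it is absent from your plan. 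Likewise, your claim that the pendant-edge change is nonnegative ``controlled by the convexity of $g_1$'' really requires convexity of $a\mapsto a\,f(1,a+n-p-1)$, which is again a fact to be proved rather than a citation of Lemma \ref{l1} (it is true, but only after a computation of the same flavour as the one for $P$). Until the net-positivity inequality is actually established, the transfer argument, and hence the characterization $G\cong K_{n-p}^{p}$, is not proved.
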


\begin{proof}
(\ref{t5p1}) Straightforward.\\
(\ref{t5p2}) Let $u$ and $v$ be the internal vertices of $G$. We may assume that there are $t$ pendent vertices adjacent to $u$ and $p-t$ pendent vertices adjacent to $v$. Thus
\begin{align}
ABS(G)&= tf(1,d_u)+(p-t)f(1,d_v)+f(d_u,d_v)\nonumber\\
              &=tf(1,t+1)+(p-t)f(1,p-t+1).\nonumber
\end{align}

Consider the function $h(t)=tf(1,t+1)+(p-t)f(1,p-t+1)$.
$$h'(t)=\frac{M-N}{(t+2)(p-t+2)\sqrt{(t+2)(p-t+2)}},$$ where $M=((p-1)t-t^2+3p+6)\sqrt{pt-t^2+2t}$ and $N=(p-t+3)(t+2)\sqrt{(p-t)(t+2)}$. Clearly, both $M>0$ and $N>0$ when $1\leq t\leq p-1$. Thus the sign of $h'(t)$ is determined by the sign of $(M-N)(M+N)=M^2-N^2$. Now
$$M^2-N^2=(2t-p)(3tp(p-t)+10t(p-t)+8p^2+48p+72).$$ Hence $h'(t)<0$ when $1\leq t<p/2$ and $h'(t)>0$ when $p/2 < t \leq p-1$. So $h(t)$ has maximum value at $t=1$ or $t=p-1$. Thus, $$ABS(G)\leq h(1)=h(p-1)=\frac{1}{\sqrt{3}}+\frac{\sqrt{p}}{p+2}+\frac{(p-1)\sqrt{p-1}}{p+1},$$   with equality if and only if $G\cong S_{2,p}=S_{2,n-2}$.

(\ref{t5p3}) Let $P$ be the set of pendent vertices in $G$.  If there are two non adjacent vertices $u,v\in V(G)\setminus P$ then $G+\{uv\}\in \Gamma_{n,p}$ and  by Lemma \ref{lem-AZI+e}, $ABS(G+\{uv\})>ABS(G)$, a contradiction. Thus the induced subgraph $G[V(G)\setminus P]\cong K_{n-p}$.
Label the vertices of  $G[V(G)\setminus P]$ by  $u_1,...,u_{n-p}$ and  for each $j=1,...,n-p$, let $d_j=|N(u_j)\cap P|$ so that $d_1\geq d_2\geq ... \geq d_{n-p}$. To obtain the desired result we want need to show that $d_1=p$ and $d_2=...=d_{n-p}=0$. So seeking a contradiction assume that $d_j\geq 1$ for some $j\geq 2$. Then $d_1\geq d_2 \geq 1$.  Let $x\in P\cap N(u_2)$ and take $G'=G-\{xu_2\}+\{xu_1\}$. Note that for each $j=1,...,n-p$, $deg_G(u_i)=d_i+n-p-1$. Then
\begin{align}
ABS(G')-ABS(G)&=f(1,d_1+n-p)-f(1,d_2+n-p-1)\nonumber\\
              &+d_1(f(1,d_1+n-p)-f(1,d_1+n-p-1))\nonumber\\
							&-(d_2-1)(f(1,d_2+n-p-1)-f(1,d_2+n-p-2))\nonumber\\
              &+\sum_{j=3}^{n-p}(f(d_j+n-p-1,d_1+n-p)-f(d_j+n-p-1,d_1+n-p-1))\nonumber\\
							&-\sum_{j=3}^{n-p}(f(d_j+n-p-1,d_2+n-p-1)-f(d_j+n-p-1,d_2+n-p-2))\nonumber\\
							&=f(1,d_1+n-p)-f(1,d_2+n-p-1)\nonumber\\
              &+d_1g_1(1,d_1+n-p-1)-(d_2-1)g_1(1,d_2+n-p-2)\nonumber\\
              &+\sum_{j=3}^{n-p}(g_1(d_1+n-p-1,d_j+n-p-1)-g_1(d_2+n-p-2,d_j+n-p-1)).\nonumber
							\end{align}
Since $d_j\geq0$ for $j=3,...,n-p$, by Lemma \ref{l2}, we have
							
\begin{align}
g_1(d_1+n-p-1,d_j+n-p-1)-&g_1(d_2+n-p-2,d_j+n-p-1)\geq \nonumber\\
&g_1(d_1+n-p-1,n-p-1)-g_1(d_2+n-p-2,n-p-1).\nonumber
\end{align}
Thus,
\begin{align}
ABS(G')-ABS(G)\geq &f(1,d_1+n-p)-f(1,d_2+n-p-1)\nonumber\\
&+d_1g_1(d_1+n-p-1,1)-(d_2-1)g_1(d_2+n-p-2,1)\nonumber\\
&+(n-p-2)(g_1(d_1+n-p-1,n-p-1)-g_1(d_2+n-p-2,n-p-1))\nonumber\\
&=w(d_1)-w(d_2-1),\nonumber
\end{align}
where $$w(t)=f(t+n-p,1)+tg_1(t+n-p-1,1)+(n-p-2)g_1(t+n-p-1,n-p-1).$$
Our next aim is to show that $w(t)$ is increasing in $t$. Note that
\begin{align}
w'(t)=&\frac{\partial f}{\partial t}(t+n-p,1)+g_1(t+n-p-1,1)+t\frac{\partial g_1}{\partial t}(t+n-p-1,1)\nonumber\\
&+(n-p-2)\frac{\partial g_1}{\partial t}(t+n-p-1,n-p-1).\nonumber
\end{align}

Since $\frac{\partial g_1}{\partial x}(x,y)$ is increasing in $y$ when $y\geq1$, we get  $$\frac{\partial g_1}{\partial t}(t+n-p-1,n-p-1)\geq \frac{\partial g_1}{\partial t}(t+n-p-1,1).$$ So
\begin{align}
w'(t)\geq& \frac{\partial f}{\partial t}(t+n-p,1)+g_1(t+n-p-1,1)+(t+n-p-1)\frac{\partial g_1}{\partial t}(t+n-p-1,1)\nonumber\\
&=L-K,\nonumber
\end{align}
where $$L=\frac{(t+n-p)^2+(t+n-p)-1}{(t+n-p-1)^{\frac{1}{2}}(t+n-p+1)^{\frac{3}{2}}}\text{ and }K=\frac{(t+n-p)^2-(t+n-p)-1}{(t+n-p-2)^{\frac{1}{2}}(t+n-p)^{\frac{3}{2}}}.$$

Since $$L^2-K^2=\frac{2(t+n-p)^5-3(t+n-p)^4-8(t+n-p)^3+3(t+n-p)^2+4(t+n-p)+1}{(t+n-p-1)(t+n-p+1)^3(t+n-p-2)(t+n-p)^3}>0,$$ we get $w'(t)\geq L-K>0$, and thus $w(t)$ is increasing in $t$ as desired. This implies that $ABS(G')-ABS(G)>w(d_1)-w(d_2-1)>0$, a contradiction. So $d_1=p$ and $d_j=0$ for all $j=2,...,n-p-1$, and hence $G\cong K_{n-p}^p$.
\end{proof}

\section*{Acknowledgement}
This work is supported by the Scientific Research Deanship, University of Ha\!'il, Saudi Arabia, through project number RG-23\,013.

\end{document}